\nonstopmode \numberwithin{equation}{section}
\newtheorem{theorem}{Theorem}[section]
\newtheorem{remark}{Remark}[section]
\begin{document}

\title[$(p,q)$-Whittaker function and its properties ]{$(p,q)$-Whittaker function and associated properties and formulas}

\author[G. Rahman, S. Mubeen,  K. S. Nisar and J. Choi ]{ Gauhar Rahman, Shahid Mubeen, Kottakkaran Sooppy  Nisar\\
and Junesang Choi*}

\address{Gauhar Rahman:    Department of Mathematics, International Islamic  University, Islamabad, Pakistan}
\email{gauhar55uom@gmail.com}

\address{Shahid Mubeen: Department of Mathematics, University of Sargodha,   Sargodha, Pakistan}
\email{smjhanda@gmail.com}

\address{Kottakkaran Sooppy  Nisar:    Department of Mathematics, College of Arts and Science-Wadi Aldawaser, 11991,
Prince Sattam bin Abdulaziz University, Alkharj, Kingdom of Saudi Arabia}
\email{n.sooppy@psau.edu.sa; ksnisar1@gmail.com}

\address{Junesang Choi: Department of Mathematics,
Dongguk University, Gyeongju
38066, Republic of Korea}
\email{junesang@mail.dongguk.ac.kr}

\keywords{Beta function, Extended Beta function, Confluent hypergeometric function, Extended confluent hypergeometric function, Hypergeometric function,  Extended  hypergeometric function, Whittaker function, Extended Whittaker functions, Mellin transform}

\subjclass[2010]{33B20, 33C20, 33C60, 33B15, 33C05.}

\thanks{*Corresponding author}

\begin{abstract}
Recently, various extensions  and variants of Bessel functions of several kinds have been presented.
Among them, the $(p,q)$-confluent hypergeometric function $\Phi_{p,q}$  has been introduced and investigated.
Here, we aim to    introduce an extended $(p,q)$-Whittaker function by using the function $\Phi_{p,q}$ 
 and establish its various properties and associated formulas such as 
   integral representations, some transformation formulas and differential formulas.
    Relevant connections of the results presented here
with  those involving relatively simple Whittaker functions are also pointed out.

 \end{abstract}

%%%%%%%%%%%%%%%%%%%%%%%%%%%%%%%%%%%%%%%%%%%%%%%%%%%%%%%%%%%%%%%%%%%%%%%%%%%%%%

\maketitle

\section{Introduction and preliminaries}\label{sec1}

We begin by recalling the classical beta function
$B(\alpha,\beta)$
 \begin{equation}\label{beta}
  B(\alpha,\, \beta) = \left\{ \aligned & \int_0^1 \, t^{\alpha -1} (1-t)^{\beta -1} \, dt
    \quad (\min\{\Re(\alpha), \, \Re(\beta)\}>0) \\
    &\frac{\Gamma (\alpha) \, \Gamma (\beta)}{\Gamma (\alpha+ \beta)}
   \hskip 23mm  \left(\alpha,\, \beta
      \in \mathbb{C}\setminus {\mathbb Z}_0^- \right), \endaligned \right.
\end{equation}
where $\Gamma$ denotes the familiar gamma function (see, e.g., \cite[Section 1.1]{Sr-Ch-12}).
Here and in the following, let $\mathbb{C}$, $\mathbb{R}^{+}$, $\mathbb{N}$, and $\mathbb{Z}_0^{-}$
be the sets of complex numbers, positive real numbers, positive integers, and non-positive integers,
respectively, and let $\mathbb{N}_0:=\mathbb{N}\cup \{0\}$ and $\mathbb{R}_0^{+}:= \mathbb{R}^{+}\cup \{0\}$.

The Gauss hypergeometric function ${}_2F_1$   and the confluent hypergeometric function ${}_1F_1$
are defined by (see, e.g.,  \cite{Rainville1960})
 \begin{eqnarray}\label{Hyper}
{}_2F_1(a,b;c;z)=\sum\limits_{n=0}^{\infty}\frac{(a)_n(b)_n}{(c)_n}\frac{z^n}{n!}
\quad \left(|z|<1;\,c  \in \mathbb{C}\setminus {\mathbb Z}_0^-  \right)
\end{eqnarray}
 and
 \begin{eqnarray}\label{Chyper}
{}_1F_1(a;c;z)=\sum\limits_{n=0}^{\infty}\frac{(a)_n}{(c)_n}\frac{z^n}{n!} \quad
   \left(|z|<\infty;\,c  \in \mathbb{C}\setminus {\mathbb Z}_0^-  \right),
\end{eqnarray}
where $(\lambda)_n$ is the Pochhammer symbol
defined (for $\lambda \in \mathbb C$) by (see  \cite[p.~2 and pp.~4-6]{Sr-Ch-12})
\begin{equation}\label{Poch} \aligned (\lambda)_n :
 & =\left\{\aligned & 1  \hskip 44 mm (n=0) \\
        & \lambda (\lambda +1) \cdots (\lambda+n-1) \quad (n \in {\mathbb N})
   \endaligned \right. \\
&= \frac{\Gamma (\lambda +n)}{ \Gamma (\lambda)}
      \quad \left(\lambda \in {\mathbb C} \setminus {\mathbb Z}_0^-\right).
\endaligned
\end{equation}
Integral representations for the Gauss hypergeometric function ${}_2F_1$   and the confluent hypergeometric function ${}_1F_1$
are recalled  (see, e.g.,  \cite[p. 65 and p. 70]{Sr-Ch-12})
\begin{equation}\label{Ihyper}
    _2F_1\,(a,\, b;\,c;\, z) = \frac{\Gamma (c)}{\Gamma (a)\,\Gamma (c-a)}
       \int_0^1 \, t^{a-1}\,(1-t)^{c-a-1}\, (1-zt)^{-b} \, dt \end{equation}
 \begin{equation*}
   (\Re(c) > \Re(a)>0; \,\, |\arg (1-z)| < \pi)
 \end{equation*}
 and
  \begin{equation}\label{Ichyper}
  \aligned
  _1F_1\,(a;\,c;\, z)= & \frac{\Gamma (c)}{\Gamma (a)\,\Gamma (c-a)}\,
    \int_0^1 \, t^{a-1}\, (1-t)^{c-a-1}\, e^{zt}\, dt \\
        & (\Re(c) > \Re(a) >0).
\endaligned
\end{equation}

Chaudhry et al. introduced the following  extended beta function (see \cite[Eq. (1.7)]{Chaudhry1997})
\begin{eqnarray}\label{Ebeta}
B(\alpha,\beta;p)=B_p(\alpha,\beta)=\int\limits_{0}^{1}\,t^{\alpha-1}\,(1-t)^{\beta-1}\,
e^{-\frac{p}{t(1-t)}}\,dt \quad (\Re(p)>0).
\end{eqnarray}
Obviously, $B(\alpha,\beta;0)=B(\alpha,\beta)$.   Chaudhry et al. \cite{Chaudhrya2004} introduced and investigated  the following extended  hypergeometric function $F_p$ and confluent hypergeometric function $\Phi_p$
\begin{eqnarray}\label{Ehyper}
F_p(a,b;c;z)=\sum\limits_{n=0}^{\infty}\frac{B_p(b+n, c-b)}{B(b, c-b)}(a)_n\frac{z^n}{n!}
\end{eqnarray}
\begin{equation*}
  \left(p \in \mathbb{R}_0^{+};\,|z|<1;\,\Re(c)>\Re(b)>0 \right)
\end{equation*}
and
\begin{eqnarray}\label{ECH}
\Phi_p(b;c;z)=\sum\limits_{n=0}^{\infty}\frac{B_p(b+n, c-b)}{B(b, c-b)}\frac{z^n}{n!} \quad  \left(p \in \mathbb{R}_0^{+};\,\Re(c)>\Re(b)>0 \right)
\end{eqnarray}
including their integral representations
\begin{eqnarray}\label{IEhyper}
F_p(a,b;c;z)=\frac{1}{B(b, c-b)}\,\int_0^1t^{b-1}(1-t)^{c-b-1}(1-zt)^{-a}\exp\Big[-\frac{p}{t(1-t)}\Big]\,dt,
\end{eqnarray}
\begin{equation*}
  \left(p \in \mathbb{R}^+;\, p=0 \,\,\,\text{and}\,\,\, |\arg(1-z)|<\pi;\, \Re(c)>\Re(b)>0 \right)
\end{equation*}
and
\begin{eqnarray}\label{IEChyper}
\Phi_p(b;c;z)=\frac{1}{B(b, c-b)}\int_0^1\,t^{b-1}(1-t)^{c-b-1}\exp\Big(zt-\frac{p}{t(1-t)}\Big)\,dt
\end{eqnarray}
\begin{equation*}
  \left(p \in \mathbb{R}^+;\, p=0 \,\,\,\text{and}\,\,\,  \Re(c)>\Re(b)>0 \right).
\end{equation*}

Clearly, the particular cases of \eqref{Ehyper}, \eqref{ECH}, \eqref{IEhyper}, and \eqref{IEChyper}
when $p=0$ reduce, respectively, to \eqref{Hyper}, \eqref{Chyper}, \eqref{Ihyper}, and \eqref{Ichyper}.

 Choi et al. \cite{Choi2014}  defined the following extension of the beta function
\begin{eqnarray}\label{EEbeta}
B(x,y;p,q)=B_{p,q}(x,y)=\int\limits_{0}^{1}\,t^{x-1}\,(1-t)^{y-1}\,\exp \left[-\frac{p}{t}-\frac{q}{1-t}\right]\,dt
\end{eqnarray}
\begin{equation*}
  \left(\min\{\Re(p),\, \Re(q) \}>0\right).
\end{equation*}
Using \eqref{EEbeta}, they \cite{Choi2014} defined the following $(p,q)$-hypergeometric function and
 $(p,q)$-confluent hypergeometric function, respectively, by
 \begin{eqnarray}\label{FEhyper}
F_{p,q}(a,b;c;z)=\sum\limits_{n=0}^{\infty}\frac{B_{p,q}(b+n, c-b)}{B(b, c-b)}(a)_n\,\frac{z^n}{n!}
\end{eqnarray}
\begin{equation*}
  \left(p,\,q \in \mathbb{R}_0^+;\, \Re(c)>\Re(b)>0 \right)
\end{equation*}
 and
\begin{eqnarray}\label{FECH}
\Phi_{p,q}(b;c;z)=\sum\limits_{n=0}^{\infty}\frac{B_{p,q}(b+n, c-b)}{B(b, c-b)}\,\frac{z^n}{n!}
\end{eqnarray}
\begin{equation*}
  \left(p,\,q \in \mathbb{R}_0^+;\, \Re(c)>\Re(b)>0 \right)
\end{equation*}
and presented their integral representations
\begin{eqnarray}\label{FEhyperInt}
\aligned
 F_{p,q}(a,b;c;z)=\frac{1}{B(b,c-b)}\,\int_0^1\,& t^{b-1}(1-t)^{c-b-1}(1-zt)^{-a}\\
& \times \exp\Big(-\frac{p}{t}-\frac{q}{1-t}\Big)\,dt
\endaligned
\end{eqnarray}
\begin{equation*}
  \left(p,\,q \in \mathbb{R}_0^+;\,\Re(c)>\Re(b)>0;\, |\arg(1-z)|<\pi   \right)
\end{equation*}
and
\begin{eqnarray}\label{FEChyper}
\aligned
\Phi_{p,q}(b;c;z)=\frac{1}{B(b, c-b)}\,\int_0^1\,& t^{b-1}(1-t)^{c-b-1}\\
& \times \exp\Big(zt-\frac{p}{t}-\frac{q}{1-t}\Big)\,dt
\endaligned
\end{eqnarray}
\begin{equation*}
  \left(p,\,q \in \mathbb{R}_0^+;\,\Re(c)>\Re(b)>0  \right).
\end{equation*}
 Obviously, when $p=q=0$, \eqref{FEhyper}-\eqref{FEChyper}  reduce, respectively, to
 \eqref{Ehyper}-\eqref{IEChyper}.
 They  \cite[Eq. (11.4)]{Choi2014} also obtained the following transformation formula
\begin{eqnarray}\label{trans}
\Phi_{p,q}(b;c;z)=e^z\,\Phi_{q,p}\Big(c-b;c;-z\Big).
\end{eqnarray}

Whittaker and Watson \cite[Chapter XVI]{Whittaker} used the confluent hypergeometric function ${}_1F_1$ to define the Whittaker function
\begin{eqnarray}\label{Whittaker}
M_{\lambda,\rho}(z)=z^{\rho+\frac{1}{2}}\,\exp\Big(\hskip -2mm -\frac{z}{2}\Big)\,{}_1F_1\Big(\rho-\lambda+\frac{1}{2};2\rho+1;z\Big)
\end{eqnarray}
\begin{equation*}
  \left(\Re(\rho)>-\frac{1}{2};\,  \Re(\rho\pm\lambda)>-\frac{1}{2} ;\, z \in \mathbb{C}\setminus (-\infty,0]\right).
\end{equation*}
 This Whittaker function \eqref{Whittaker}  is a special
solution of the Whittaker's differential equation and a modified form of the confluent hypergeometric
equation  to make formulas involving the solutions
more symmetric (see \cite{Whittaker1}).

Nagar et al. \cite{Nagar} used the extended confluent hypergeometric series $\Phi_p$ in \eqref{ECH} to  extend the Whittaker function $M_{\lambda,\rho}(z)$ as follows:
\begin{eqnarray}\label{EWhittaker}
M_{p;\lambda,\rho}(z)=z^{\rho+\frac{1}{2}}\exp\Big(\hskip -2mm -\frac{z}{2}\Big)\Phi_p\Big(\rho-\lambda+\frac{1}{2};2\rho+1;z\Big)
\end{eqnarray}
\begin{equation*}
  \left(p \in \mathbb{R}_0^+;\, \Re(\rho)>-\frac{1}{2};\,  \Re(\rho\pm\lambda)>-\frac{1}{2};\, z \in \mathbb{C}\setminus (-\infty,0] \right).
\end{equation*}

\vskip 3mm
Here, by using  the $(p,q)$-confluent hypergeometric function  $\Phi_{p,q}$ in \eqref{FECH},
we define the following $(p,q)$-Whittaker function
\begin{eqnarray}\label{FWhittaker}
M_{p,q;\lambda,\rho}(z)=z^{\rho+\frac{1}{2}}\exp\Big(\hskip -2mm -\frac{z}{2}\Big)\Phi_{p,q}\Big(\rho-\lambda+\frac{1}{2};2\rho+1;z\Big)
\end{eqnarray}
\begin{equation*}
  \left(p,\,q \in \mathbb{R}_0^+;\, \Re(\rho)>-\frac{1}{2};\,  \Re(\rho\pm\lambda)>-\frac{1}{2};\, z \in \mathbb{C}\setminus (-\infty,0]  \right).
\end{equation*}
Clearly, $M_{p,p;\lambda,\rho}(z)=M_{p;\lambda,\rho}(z)$ in \eqref{EWhittaker}. Then we investigate   a number of formulas involving
the  $(p,q)$-Whittaker function in \eqref{FWhittaker}, systematically, such as various integral representations,
a transformation formula, a Mellin transform, and a derivative formula.

\section{Formulas involving the $(p,q)$-Whittaker function} \label{sec2}

Here, we present a number of formulas involving the $(p,q)$-Whittaker function in \eqref{FWhittaker},
in a rather systematic way.

\vskip 3mm

\begin{theorem}\label{Th1}
Let $\Re(\rho)>\Re(\rho\pm\lambda)>-\frac{1}{2}$ and $z \in \mathbb{C}\setminus (-\infty,0]$. Also, let $p,\, q  \in \mathbb{R}_0^+$.
 Then the following integral representations hold true:
\begin{equation}\label{int1}
\aligned
M_{p,q;\lambda,\rho}(z)=& \frac{z^{\rho+\frac{1}{2}}\exp(-\frac{z}{2})}{B(\rho-\lambda+\frac{1}{2},\rho+\lambda+\frac{1}{2})}\\
 & \times \int_0^1 t^{\rho-\lambda-\frac{1}{2}}(1-t)^{\rho+\lambda-\frac{1}{2}}\exp\Big(zt-\frac{p}{t}-\frac{q}{1-t}\Big)\,dt;
 \endaligned
\end{equation}

\begin{equation}\label{int2}
\aligned
M_{p,q;\lambda,\rho}(z)=& \frac{z^{\rho+\frac{1}{2}}\exp(\frac{z}{2})}{B(\rho-\lambda+\frac{1}{2},\rho+\lambda+\frac{1}{2})}\\
 & \times \int_0^1 u^{\rho+\lambda-\frac{1}{2}}(1-u)^{\rho-\lambda-\frac{1}{2}}\exp\Big(-zu-\frac{p}{1-u}-\frac{q}{u}\Big)du;
 \endaligned
\end{equation}

\begin{equation}\label{int3}
\aligned
M_{p,q;\lambda,\rho}(z)=&\frac{(b-a)^{-2\rho}z^{\rho+\frac{1}{2}}
\exp(-\frac{z}{2})}{B(\rho-\lambda+\frac{1}{2},\rho+\lambda+\frac{1}{2})}\int_a^b (u-a)^{\rho-\lambda-\frac{1}{2}}(b-u)^{\rho+\lambda-\frac{1}{2}}\\
&\hskip 10mm\times\exp\Big[\frac{z(u-a)}{b-a}-\frac{p(b-a)}{u-a}-\frac{q(b-a)}{b-u}\Big]\,du
\endaligned
\end{equation}
\begin{equation*}
  \left(a,\, b \in \mathbb{R} \,\,\, \text{with}\,\,\, b>a   \right);
\end{equation*}

\begin{equation}\label{int4}
\aligned
M_{p,q;\lambda,\rho}(z)= & \frac{z^{\rho+\frac{1}{2}}
\exp(-\frac{z}{2})}{B(\rho-\lambda+\frac{1}{2},\rho+\lambda+\frac{1}{2})}\int_0^\infty u^{\rho-\lambda-\frac{1}{2}}(1+u)^{-(2\rho+1)}\\
&\hskip 10mm  \times\exp\Big[\frac{zu}{1+u}-\frac{p(1+u)}{u}-q(1+u)\Big]\,du;
\endaligned
\end{equation}

\begin{equation}\label{int5}
\aligned
M_{p,q;\lambda,\rho}(z)=&\frac{2^{-2\rho}z^{\rho+\frac{1}{2}}
}{B(\rho-\lambda+\frac{1}{2},\rho+\lambda+\frac{1}{2})}\int_{-1}^1\, (1+u)^{\rho-\lambda-\frac{1}{2}}(1-u)^{\rho+\lambda-\frac{1}{2}}\\
&\hskip 30mm \times\exp\Big[\frac{z(u+1)}{2}-\frac{2p}{1+u}-\frac{2q}{1-u}\Big]\, du.
\endaligned
\end{equation}
\end{theorem}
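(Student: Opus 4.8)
The plan is to treat \eqref{int1} as the base case, obtained directly from the definition \eqref{FWhittaker} and the integral representation \eqref{FEChyper}, and then to derive the remaining four representations from it via the transformation formula \eqref{trans} and a sequence of elementary changes of variable. First I would specialize \eqref{FEChyper} with $b=\rho-\lambda+\tfrac12$ and $c=2\rho+1$, so that $c-b=\rho+\lambda+\tfrac12$ and $B(b,c-b)=B(\rho-\lambda+\tfrac12,\rho+\lambda+\tfrac12)$. Substituting the resulting expression for $\Phi_{p,q}\bigl(\rho-\lambda+\tfrac12;2\rho+1;z\bigr)$ into \eqref{FWhittaker} and carrying the prefactor $z^{\rho+\frac12}\exp(-z/2)$ through the integral yields \eqref{int1} at once.

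For \eqref{int2} I would invoke the transformation formula \eqref{trans}, which gives $\Phi_{p,q}\bigl(\rho-\lambda+\tfrac12;2\rho+1;z\bigr)=e^{z}\,\Phi_{q,p}\bigl(\rho+\lambda+\tfrac12;2\rho+1;-z\bigr)$. Feeding this into \eqref{FWhittaker} turns the prefactor $\exp(-z/2)$ into $\exp(z/2)$, and applying \eqref{FEChyper} to $\Phi_{q,p}$ with the roles of $p$ and $q$ interchanged and argument $-z$ produces the integrand displayed in \eqref{int2}; the beta normalization is unaffected because $B(x,y)=B(y,x)$.

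The last three representations are pure substitutions in \eqref{int1}. For \eqref{int3} I would set $t=(u-a)/(b-a)$, so that $1-t=(b-u)/(b-a)$ and $dt=du/(b-a)$; the three exponential terms transform into those shown, and the constant prefactor is controlled by collecting the powers of $(b-a)$ arising from the two monomials together with the Jacobian, whose exponents sum to $-(\rho-\lambda-\tfrac12)-(\rho+\lambda-\tfrac12)-1=-2\rho$, accounting for the factor $(b-a)^{-2\rho}$. For \eqref{int4} I would use $t=u/(1+u)$ (equivalently $u=t/(1-t)$), which maps $(0,1)$ onto $(0,\infty)$, with $1-t=(1+u)^{-1}$ and $dt=(1+u)^{-2}\,du$; here the powers of $(1+u)$ from the two monomials and the Jacobian combine to $-(2\rho+1)$. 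Finally, \eqref{int5} is the special case $a=-1$, $b=1$ of \eqref{int3}, equivalently the substitution $t=(1+u)/2$ in \eqref{int1}, for which $b-a=2$ and the constant prefactor becomes $2^{-2\rho}$.

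Under the stated hypotheses, in particular $\Re(\rho\pm\lambda)>-\tfrac12$, the beta factor $B(\rho-\lambda+\tfrac12,\rho+\lambda+\tfrac12)$ is well defined and the integral \eqref{int1} converges in the boundary case $p=q=0$ as well; each subsequent step is either the cited identity \eqref{trans} or an invertible smooth change of variable on a fixed interval, so convergence is preserved throughout. I expect the only genuine obstacle to be the bookkeeping of the constant Jacobian factors in \eqref{int3}--\eqref{int5}; the key observation that makes these prefactors come out cleanly is that the two monomial exponents and the Jacobian exponent always telescope to $-2\rho$ (or to $-(2\rho+1)$ in the half-line substitution of \eqref{int4}).
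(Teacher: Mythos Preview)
Your proposal is correct and follows essentially the same route as the paper: obtain \eqref{int1} by inserting \eqref{FEChyper} into the definition \eqref{FWhittaker}, and then derive \eqref{int3}, \eqref{int4}, \eqref{int5} via the substitutions $t=(u-a)/(b-a)$, $t=u/(1+u)$, and the specialization $a=-1$, $b=1$, exactly as the paper does. The only minor deviation is your treatment of \eqref{int2} through the transformation formula \eqref{trans}; the paper instead gets \eqref{int2} by the direct substitution $t=1-u$ in \eqref{int1}, which is precisely the change of variable underlying \eqref{trans}, so the two arguments are equivalent.
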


\begin{proof}
Applying the integral representation of the $(p,q)$-confluent hypergeometric function in \eqref{FEChyper}
to the $(p,q)$-Whittaker function in \eqref{FWhittaker}, we obtain the integral representation \eqref{int1}.
Then, by setting $t=1-u$, $t=\frac{u-a}{b-a}$, and $t=\frac{u}{1+u}$ in (\ref{int1}), we get \eqref{int2},
\eqref{int3}, and \eqref{int4}, respectively. Finally, setting $a=-1$ and $b=1$ in \eqref{int3} yields \eqref{int5}.
\end{proof}

\vskip 3mm
\begin{remark}\label{rmk1}
In view of \eqref{FEChyper}, we find from \eqref{int2} that
the $(p,q)$-Whittaker function  can also be expressed in the following form
\begin{eqnarray}
M_{p,q;\lambda,\rho}(z)=z^{\rho+\frac{1}{2}}\exp\left(\frac{z}{2}\right)\,\Phi_{q,p}\Big(\rho+\lambda+\frac{1}{2};2\rho+1;-z\Big)
\end{eqnarray}
\begin{equation*}
  \left(p,\,q \in \mathbb{R}_0^+;\, \Re(\rho)>-\frac{1}{2};\,  \Re(\rho\pm\lambda)>-\frac{1}{2};\, z \in \mathbb{C}\setminus (-\infty,0]  \right).
\end{equation*}
\end{remark}

\vskip 3mm
\begin{theorem}\label{thm2}
The following transformation formula for the $(p,q)$-Whittaker function holds true:
\begin{eqnarray}
M_{p,q;\lambda,\rho}(z)= (-1)^{\rho + \frac{1}{2}}\,M_{q,p;-\lambda,\rho}(-
z)
\end{eqnarray}
\begin{equation*}
  \left(p,\,q \in \mathbb{R}_0^+;\, \Re(\rho)>-\frac{1}{2};\,  \Re(\rho\pm\lambda)>-\frac{1}{2};\, z \in \mathbb{C}\setminus (-\infty,0]  \right).
\end{equation*}
\end{theorem}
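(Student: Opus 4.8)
The plan is to derive this transformation directly from the alternative representation of the $(p,q)$-Whittaker function recorded in Remark \ref{rmk1}, which itself rests on the symmetry \eqref{trans} of $\Phi_{p,q}$. The essential observation is that Remark \ref{rmk1} already expresses $M_{p,q;\lambda,\rho}(z)$ in terms of $\Phi_{q,p}$ evaluated at $-z$, so only the prefactors need to be matched against the defining formula \eqref{FWhittaker} for the partner function on the right-hand side. This avoids reworking any integral and reduces the whole argument to bookkeeping of parameters and a fractional power.

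First I would write out $M_{q,p;-\lambda,\rho}(-z)$ from the definition \eqref{FWhittaker} by performing the substitutions $p\mapsto q$, $q\mapsto p$, $\lambda\mapsto-\lambda$ and $z\mapsto-z$. Using $\rho-(-\lambda)+\frac{1}{2}=\rho+\lambda+\frac{1}{2}$ and $\exp\!\big(-\tfrac{-z}{2}\big)=\exp\!\big(\tfrac{z}{2}\big)$, this yields
\[
M_{q,p;-\lambda,\rho}(-z)=(-z)^{\rho+\frac{1}{2}}\exp\!\Big(\tfrac{z}{2}\Big)\,\Phi_{q,p}\Big(\rho+\lambda+\tfrac{1}{2};2\rho+1;-z\Big).
\]
Next I would factor the power as $(-z)^{\rho+\frac{1}{2}}=(-1)^{\rho+\frac{1}{2}}\,z^{\rho+\frac{1}{2}}$ on the chosen branch, giving
\[
M_{q,p;-\lambda,\rho}(-z)=(-1)^{\rho+\frac{1}{2}}\,z^{\rho+\frac{1}{2}}\exp\!\Big(\tfrac{z}{2}\Big)\,\Phi_{q,p}\Big(\rho+\lambda+\tfrac{1}{2};2\rho+1;-z\Big).
\]
The factor multiplying $(-1)^{\rho+\frac{1}{2}}$ is precisely the expression for $M_{p,q;\lambda,\rho}(z)$ supplied by Remark \ref{rmk1}, so this collapses to $M_{q,p;-\lambda,\rho}(-z)=(-1)^{\rho+\frac{1}{2}}\,M_{p,q;\lambda,\rho}(z)$, and solving for $M_{p,q;\lambda,\rho}(z)$ delivers the stated identity.

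The only delicate point — and the step I expect to require the most care — is the branch interpretation of the fractional power $(-z)^{\rho+\frac{1}{2}}$ and the attendant reading of $(-1)^{\rho+\frac{1}{2}}$. Strictly the manipulation produces the factor $(-1)^{-(\rho+\frac{1}{2})}$ in front of $M_{q,p;-\lambda,\rho}(-z)$, so the passage to the form $(-1)^{\rho+\frac{1}{2}}$ in the statement amounts to treating this sign factor as a single formal symbol; since the functions are defined for $z\in\mathbb{C}\setminus(-\infty,0]$ with the principal branch, one must fix the determination of $(-1)^{\rho+\frac{1}{2}}$ consistently throughout. I expect this sign and branch bookkeeping, rather than any genuine analytic difficulty, to be the crux of a fully rigorous write-up.
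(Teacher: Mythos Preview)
Your proposal is correct and essentially coincides with the paper's proof: the paper applies \eqref{trans} to \eqref{FWhittaker} to obtain exactly the formula of Remark~\ref{rmk1}, and then recognises the right-hand side via the definition \eqref{FWhittaker}, which is precisely your matching step. Your additional remarks on the branch determination of $(-1)^{\rho+\frac{1}{2}}$ are well taken and in fact go beyond what the paper makes explicit.
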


\begin{proof}
Applying \eqref{trans} to \eqref{FWhittaker}, we get
\begin{equation}\label{thm2-pf1}
 M_{p,q;\lambda,\rho}(z)= z^{\rho+\frac{1}{2}}\,\exp\left(\frac{z}{2}\right)\, \Phi_{q,p}\Big(\rho+\lambda+\frac{1}{2};2\rho+1;-z\Big).
\end{equation}
Using \eqref{FWhittaker} in \eqref{thm2-pf1}, we obtain the desired result.
\end{proof}

\vskip 3mm

\begin{theorem}\label{Th2}
The following Mellin transformation  for the $(p,q)$-Whittaker function holds true:
\begin{equation}\label{Th2-eq1}
\aligned
&\mathfrak{M}\{M_{p,q;\lambda,\rho}(z);p\rightarrow r,q\rightarrow s\} \\ &\hskip 10mm =\frac{z^{\rho+\frac{1}{2}}\exp(-\frac{z}{2})\,\Gamma(r)\,\Gamma(s)\, B(\rho+r-\lambda+\frac{1}{2},\rho+s+\lambda+\frac{1}{2})}
{B(\rho-\lambda+\frac{1}{2},\rho+\lambda+\frac{1}{2})}\\
&\hskip 15mm \times {}_1F_1\Big(\rho+r-\lambda+\frac{1}{2};2\rho+r+s+1;z\Big)
\endaligned
\end{equation}
\begin{equation*}
\left(\min\{\Re(s),\, \Re(r) \}>0;\, \Re(\rho\pm\lambda+r)>-\frac{1}{2}  \right).
\end{equation*}

\end{theorem}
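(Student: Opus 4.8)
The plan is to begin from the integral representation \eqref{int1}, apply the two-variable Mellin transform in the parameters $p$ and $q$, and then interchange the $(p,q)$-integration with the $t$-integration so that the inner integrals decouple into elementary gamma integrals, leaving a single $t$-integral that is recognized through the integral representation \eqref{Ichyper} of ${}_1F_1$. Recalling that
\[
\mathfrak{M}\{f;p\rightarrow r,q\rightarrow s\}=\int_0^\infty\!\!\int_0^\infty p^{r-1}q^{s-1}\,f\,dp\,dq,
\]
the first step is to substitute \eqref{int1} for $f=M_{p,q;\lambda,\rho}(z)$ and pull the $(p,q)$-free prefactor $z^{\rho+1/2}\exp(-z/2)/B(\rho-\lambda+\frac12,\rho+\lambda+\frac12)$ outside the integrals.

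Next I would carry out the interchange of the order of integration. The variable $p$ enters only through $e^{-p/t}$ and $q$ only through $e^{-q/(1-t)}$, so the inner integrations factor and evaluate as the standard gamma integrals
\[
\int_0^\infty p^{r-1}e^{-p/t}\,dp=\Gamma(r)\,t^{r},
\qquad
\int_0^\infty q^{s-1}e^{-q/(1-t)}\,dq=\Gamma(s)\,(1-t)^{s},
\]
valid for $\Re(r)>0$ and $\Re(s)>0$. Re-inserting these reduces the problem to the single integral
\[
\int_0^1 t^{\rho+r-\lambda-\frac12}(1-t)^{\rho+s+\lambda-\frac12}\,e^{zt}\,dt.
\]
Matching the exponents against \eqref{Ichyper} with $a=\rho+r-\lambda+\frac12$ and $c-a=\rho+s+\lambda+\frac12$ (hence $c=2\rho+r+s+1$), this integral equals $B(\rho+r-\lambda+\frac12,\rho+s+\lambda+\frac12)\,{}_1F_1(\rho+r-\lambda+\frac12;2\rho+r+s+1;z)$, since $\Gamma(a)\,\Gamma(c-a)/\Gamma(c)=B(a,c-a)$. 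Collecting the prefactor then yields \eqref{Th2-eq1}.

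The hard part will be the rigorous justification of the interchange of integration via Fubini's theorem. One must check absolute integrability of the function $p^{\Re(r)-1}q^{\Re(s)-1}\,t^{\Re(\rho-\lambda)-1/2}(1-t)^{\Re(\rho+\lambda)-1/2}\exp(\Re(z)\,t-p/t-q/(1-t))$ over $(0,\infty)^2\times(0,1)$. The exponential decay in $p$ and $q$ controls the behaviour at infinity, the hypotheses $\Re(r)>0$ and $\Re(s)>0$ tame the singularities at $p=0$ and $q=0$, and the endpoint behaviour at $t=0,1$ after the inner integrations is governed precisely by $\Re(\rho-\lambda+r)>-\frac12$ and $\Re(\rho+\lambda+s)>-\frac12$, which are subsumed in the stated hypotheses. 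Once Fubini is in force, all remaining manipulations are routine.
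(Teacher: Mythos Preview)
Your proposal is correct and follows essentially the same route as the paper: substitute the integral representation \eqref{int1}, interchange the order of integration, evaluate the inner $p$- and $q$-integrals as gamma integrals, and identify the remaining $t$-integral via \eqref{Ichyper}. If anything, your Fubini discussion is more explicit than the paper's, which simply asserts that the interchange is verified under the stated conditions.
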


\begin{proof}
Let $\mathcal{L}_1$ be the left side of \eqref{Th2-eq1}. By definition of Mellin transformation, we have
\begin{equation}\label{Th2-pf1}
 \mathcal{L}_1 =\int_0^\infty\int_0^\infty p^{r-1}q^{s-1}M_{p,q;\lambda,\rho}(z)\,dp\,dq.
\end{equation}
Replacing $M_{p,q;\lambda,\rho}(z)$ in \eqref{Th2-pf1} by its integral representation \eqref{int1}
and interchanging the order of the integrals, which is verified under the given conditions here,
we obtain
\begin{equation}\label{Th2-pf2}
\aligned
   \mathcal{L}_1=&\int_0^\infty\int_0^\infty p^{r-1}q^{s-1}M_{p,q,\lambda,\rho}(z)\,dp\,dq\\
=& \frac{z^{\rho+\frac{1}{2}}\exp(-\frac{z}{2})}{B(\rho-\lambda+\frac{1}{2},\rho+\lambda+\frac{1}{2})}\int_0^1t^{\rho-\lambda-\frac{1}{2}}
(1-t)^{\rho+\lambda-\frac{1}{2}}e^{zt}\\
&\hskip 5mm  \times\int_0^\infty p^{r-1}\exp\left(-\frac{p}{t}\right)\,dp\,\int_0^\infty q^{s-1}\exp\left(-\frac{q}{1-t}\right)\,dq\,dt.
\endaligned
\end{equation}

Using the following easily derivable formula
 \begin{equation}\label{Th2-pf3}
   \int_0^\infty\,u^{x-1}\,\exp (-\alpha u)\,du = \frac{\Gamma (x)}{\alpha^x} \quad \left(\Re(x)>0;\,\,
     \alpha \in \mathbb{R}^+ \right),
 \end{equation}
we get
\begin{equation}\label{Th2-pf4}
  \int_0^\infty p^{r-1}\exp\left(-\frac{p}{t}\right)dp\,
     \int_0^\infty q^{s-1}\exp\left(-\frac{q}{1-t}\right)dq=t^r(1-t)^s\Gamma(r)\Gamma(s).
\end{equation}

Setting \eqref{Th2-pf4} in \eqref{Th2-pf2},  with the aid of \eqref{Ichyper}, we obtain the desired result.

\end{proof}

\vskip 3mm

\begin{theorem}\label{Th3}
The following integral formula  involving the $(p,q)$-Whittaker function  holds true:
\begin{equation}\label{Th3-eq1}
\aligned
& \int_0^\infty z^{\delta-1}e^{-\alpha z}M_{p,q;\lambda,\rho}(\mu z)\,dz=\frac{\mu^{\rho+\frac{1}{2}}\Gamma(\delta+\rho+\frac{1}{2})}{(\alpha+\frac{\mu}{2})^{\delta+\rho+\frac{1}{2}}}\\
&\hskip 30mm \times F_{p,q}\Big(\delta+\rho+\frac{1}{2}, \rho-\lambda+\frac{1}{2}; 2\rho+1;\frac{2\mu}{2\alpha+\mu}\Big)
\endaligned
\end{equation}
\begin{equation*}
  \left(p,\, q \in \mathbb{R}_0^+;\,\alpha+\frac{\mu}{2}>0,\, \mu<0,\, 2\alpha+\mu>2|\mu|;\,\Re(\delta+\rho)>-\frac{1}{2},\,  \Re(\rho\pm\lambda)>-\frac{1}{2}  \right).
\end{equation*}
\end{theorem}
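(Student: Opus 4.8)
The plan is to expand the $(p,q)$-Whittaker function into its defining series and integrate term by term against $z^{\delta-1}e^{-\alpha z}$. First I would insert the definition \eqref{FWhittaker} of $M_{p,q;\lambda,\rho}$ evaluated at $\mu z$, namely
$$M_{p,q;\lambda,\rho}(\mu z)=(\mu z)^{\rho+\frac{1}{2}}\exp\Big(-\tfrac{\mu z}{2}\Big)\,\Phi_{p,q}\Big(\rho-\lambda+\tfrac{1}{2};2\rho+1;\mu z\Big),$$
and then replace $\Phi_{p,q}$ by its series \eqref{FECH}, noting that here $c-b=(2\rho+1)-(\rho-\lambda+\tfrac12)=\rho+\lambda+\tfrac12$. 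Substituting this into the left-hand side of \eqref{Th3-eq1} produces a single $z$-integral multiplying each term of the series in $n$.

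Next I would interchange summation and integration, which under the stated conditions is legitimate. For each $n\in\mathbb{N}_0$ the resulting inner integral is a Laplace-type integral evaluated via \eqref{Th2-pf3}:
$$\int_0^\infty z^{\delta+\rho+\frac{1}{2}+n-1}\exp\Big[-\Big(\alpha+\tfrac{\mu}{2}\Big)z\Big]\,dz=\frac{\Gamma\big(\delta+\rho+\tfrac12+n\big)}{\big(\alpha+\tfrac{\mu}{2}\big)^{\delta+\rho+\frac{1}{2}+n}}.$$
Here the hypothesis $\alpha+\tfrac{\mu}{2}>0$ secures convergence, while $\Re(\delta+\rho)>-\tfrac12$ keeps the exponent $\delta+\rho+\tfrac12+n$ positive for every $n$.

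Finally I would reassemble the series. Factoring $\mu^{\rho+\frac12}$ and $(\alpha+\tfrac{\mu}{2})^{-(\delta+\rho+\frac12)}$ to the front and writing $\Gamma(\delta+\rho+\tfrac12+n)=\Gamma(\delta+\rho+\tfrac12)\,(\delta+\rho+\tfrac12)_n$ by the Pochhammer relation \eqref{Poch}, the leftover sum becomes
$$\frac{1}{B(\rho-\lambda+\frac12,\rho+\lambda+\frac12)}\sum_{n=0}^\infty \frac{B_{p,q}(\rho-\lambda+\frac12+n,\rho+\lambda+\frac12)\,(\delta+\rho+\frac12)_n}{n!}\Big(\frac{\mu}{\alpha+\frac{\mu}{2}}\Big)^n.$$
Since $\frac{\mu}{\alpha+\mu/2}=\frac{2\mu}{2\alpha+\mu}$, a direct comparison with the definition \eqref{FEhyper} of $F_{p,q}$ (with $a=\delta+\rho+\tfrac12$, $b=\rho-\lambda+\tfrac12$, $c=2\rho+1$) identifies this as $F_{p,q}\big(\delta+\rho+\tfrac12,\rho-\lambda+\tfrac12;2\rho+1;\tfrac{2\mu}{2\alpha+\mu}\big)$, which gives the claimed formula.

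The hard part will be the rigorous justification of the term-by-term integration rather than the algebra. The condition $2\alpha+\mu>2|\mu|$ forces $\big|\tfrac{2\mu}{2\alpha+\mu}\big|<1$, placing the argument strictly inside the disk of convergence of the $F_{p,q}$-series, and $\alpha+\tfrac{\mu}{2}>0$ makes each Laplace integral finite; combining these with a crude uniform bound on the factors $B_{p,q}(\rho-\lambda+\tfrac12+n,\rho+\lambda+\tfrac12)$ yields an integrable dominating function, so a standard convergence theorem permits the swap of $\sum$ and $\int$.
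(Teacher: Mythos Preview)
Your argument is correct, but it follows a different route from the paper. The paper inserts the \emph{integral} representation \eqref{int1} of $M_{p,q;\lambda,\rho}$, interchanges the $z$- and $t$-integrals (Fubini), evaluates the inner $z$-integral as a Gamma function, and then expands the resulting factor $(1-\tfrac{2\mu}{2\alpha+\mu}t)^{-(\delta+\rho+1/2)}$ by the binomial theorem before recognising $F_{p,q}$. You instead insert the \emph{series} definition \eqref{FECH} of $\Phi_{p,q}$, swap sum and $z$-integral, evaluate each term as a Gamma integral, and collapse the Pochhammer factors directly into \eqref{FEhyper}. Your path is a step shorter (one interchange rather than two, and no binomial expansion) and never needs the integral representation of $M_{p,q;\lambda,\rho}$ at all; the paper's path has the advantage that it also yields, essentially for free, the closed integral form \eqref{Th3-pf3}, from which one could read off the result immediately via \eqref{FEhyperInt} without passing through any series.
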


\begin{proof}
Let $\mathcal{L}_2$ be the left side of \eqref{Th3-eq1}. By using the integral representation of $M_{p,q;\lambda,\rho}(z)$
  in \eqref{int1} and interchanging the order of integrals, which is verified under the given assumptions here, we have
  \begin{equation}\label{Th3-pf1}
   \aligned
   \mathcal{L}_2 =  \frac{\mu^{\rho +\frac{1}{2}}}{B(\rho-\lambda+\frac{1}{2},\rho+\lambda+\frac{1}{2})}\,
       & \int_0^1\,t^{\rho-\lambda-\frac{1}{2}}(1-t)^{\rho+\lambda-\frac{1}{2}}\exp\Big(-\frac{p}{t}-\frac{q}{1-t}\Big)\\
     & \times \left\{\int_{0}^{\infty}\,z^{\delta+\rho-\frac{1}{2}}\,e^{-(\alpha+\frac{\mu}{2}-\mu t )z}\,dz\right\}\,dt.
    \endaligned
  \end{equation}
As in \eqref{Th2-pf3}, we find
\begin{equation}\label{Th3-pf2}
 \int_{0}^{\infty}\,z^{\delta+\rho-\frac{1}{2}}\,e^{-(\alpha+\frac{\mu}{2}-\mu t )z}\,dz
   = \frac{\Gamma \left(\delta + \rho + \frac{1}{2}\right)}{\left(\alpha + \frac{\mu}{2}-\mu t\right)^{\delta + \rho + \frac{1}{2}}}
\end{equation}
\begin{equation*}
  \left(\Re(\delta+\rho)>-\frac{1}{2};\,\alpha+\frac{\mu}{2},\, -\mu \in \mathbb{R}^+   \right).
\end{equation*}
Setting \eqref{Th3-pf2} in \eqref{Th3-pf1}, we obtain
\begin{equation}\label{Th3-pf3}
   \aligned
  \mathcal{L}_2 =&  \frac{\Gamma \left(\delta + \rho + \frac{1}{2}\right)\,\mu^{\rho +\frac{1}{2}}}{ \left(\alpha +\frac{\mu}{2}\right)^{\delta + \rho + \frac{1}{2}}\,B(\rho-\lambda+\frac{1}{2},\rho+\lambda+\frac{1}{2})}\\
       &\times \int_0^1\,t^{\rho-\lambda-\frac{1}{2}}(1-t)^{\rho+\lambda-\frac{1}{2}}\exp\Big(-\frac{p}{t}-\frac{q}{1-t}\Big)\,
        \left(1- \frac{2\mu}{2\alpha+\mu}t\right)^{-\left(\delta + \rho + \frac{1}{2}\right)}\, dt.
    \endaligned
  \end{equation}
By the generalized binomial theorem, we get
\begin{equation}\label{Th3-pf4}
    \left(1- \frac{2\mu}{2\alpha+\mu}t\right)^{-\left(\delta + \rho + \frac{1}{2}\right)}
    =\sum_{n=0}^{\infty}\,\frac{\big(\delta + \rho + 1/2\big)_n}{n!}\,  \left(\frac{2\mu}{2\alpha+\mu}\right)^n\, t^n\,\,\,
         \end{equation}
\begin{equation*}
  \left( \left|2\mu\,t|/|2\alpha+\mu t\right|<1 \right).
\end{equation*}
Using  \eqref{Th3-pf4} in \eqref{Th3-pf3} and interchanging the order of summation and integral,
which is verified under the assumptions here, and using \eqref{FEhyperInt},  we have
\begin{equation}\label{Th3-eq5}
\aligned
 \mathcal{L}_2 =&\frac{\mu^{\rho+\frac{1}{2}}\Gamma(\delta+\rho+\frac{1}{2})}{(\alpha+\frac{\mu}{2})^{\delta+\rho+\frac{1}{2}}}\\
& \times \sum_{n=0}^{\infty}\,\frac{B_{p,q}(\rho-\lambda+\frac{1}{2}+n, \rho+\lambda+ \frac{1}{2})}{B(\rho-\lambda+\frac{1}{2}, \rho+\lambda+ \frac{1}{2})}\,\frac{(\delta+\rho+1/2)_n}{n!}\,\left(\frac{2\mu}{2\alpha+\mu}\right)^n,
\endaligned
\end{equation}
which,  in view of \eqref{FEhyper}, is equal to the right side of \eqref{Th3-eq1}.

\end{proof}

\vskip 3mm

\begin{theorem}\label{Th5}
The following derivative formula holds true:
\begin{eqnarray}\label{deriv}
\frac{d^n}{dz^n}\Big\{e^{\frac{z}{2}}z^{-\rho-\frac{1}{2}}M_{p,q;\lambda,\rho}(z)\Big\} =\frac{(\rho-\lambda+\frac{1}{2})_n}{(2\rho+1)_n}e^{\frac{z}{2}}z^{-\rho-\frac{n}{2}-\frac{1}{2}}M_{p,q;\lambda-\frac{n}{2},\rho+\frac{n}{2}}(z)
 \quad \left(n\in\mathbb{N}_0\right).
\end{eqnarray}
\end{theorem}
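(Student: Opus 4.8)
The plan is to observe that the bracketed quantity on the left of \eqref{deriv} collapses to a single $(p,q)$-confluent hypergeometric function, so that the whole statement is equivalent to a parameter-shift derivative formula for $\Phi_{p,q}$. Indeed, from the definition \eqref{FWhittaker} one has directly
\begin{equation*}
e^{\frac{z}{2}}z^{-\rho-\frac{1}{2}}M_{p,q;\lambda,\rho}(z)=\Phi_{p,q}\Big(\rho-\lambda+\tfrac{1}{2};2\rho+1;z\Big),
\end{equation*}
and, replacing $\lambda\mapsto\lambda-\frac{n}{2}$ and $\rho\mapsto\rho+\frac{n}{2}$ in the same identity, the right-hand side of \eqref{deriv} likewise simplifies to
\begin{equation*}
e^{\frac{z}{2}}z^{-\rho-\frac{n}{2}-\frac{1}{2}}M_{p,q;\lambda-\frac{n}{2},\rho+\frac{n}{2}}(z)=\Phi_{p,q}\Big(\rho-\lambda+n+\tfrac{1}{2};2\rho+n+1;z\Big).
\end{equation*}
Hence, writing $b=\rho-\lambda+\frac{1}{2}$ and $c=2\rho+1$, the claim reduces to showing
\begin{equation*}
\frac{d^n}{dz^n}\Phi_{p,q}(b;c;z)=\frac{(b)_n}{(c)_n}\,\Phi_{p,q}(b+n;c+n;z)\qquad(n\in\mathbb{N}_0).
\end{equation*}

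The key step is the $n=1$ case, which I would establish by term-by-term differentiation of the series \eqref{FECH}; this is legitimate since $\Phi_{p,q}$ is entire in $z$. Differentiating and reindexing produces $\sum_{k\ge0}\frac{B_{p,q}(b+k+1,\,c-b)}{B(b,\,c-b)}\frac{z^k}{k!}$. The only arithmetic needed is the elementary ratio, obtained from the Gamma-function form in \eqref{beta},
\begin{equation*}
\frac{B(b+1,\,c-b)}{B(b,\,c-b)}=\frac{\Gamma(b+1)\,\Gamma(c)}{\Gamma(b)\,\Gamma(c+1)}=\frac{b}{c},
\end{equation*}
together with the fact that $c-b$ is left unchanged under $b\mapsto b+1$, $c\mapsto c+1$. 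Inserting this identity term by term recasts the differentiated series as $\frac{b}{c}\,\Phi_{p,q}(b+1;c+1;z)$, giving the single-step rule $\frac{d}{dz}\Phi_{p,q}(b;c;z)=\frac{b}{c}\,\Phi_{p,q}(b+1;c+1;z)$.

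Finally I would pass from $n=1$ to general $n$ by induction: applying the single-step rule to $\Phi_{p,q}(b+k;c+k;z)$ advances both parameters by one and contributes the factor $\frac{b+k}{c+k}$, and the telescoping product $\frac{b}{c}\cdot\frac{b+1}{c+1}\cdots\frac{b+n-1}{c+n-1}$ is exactly $\frac{(b)_n}{(c)_n}$ by \eqref{Poch}. Substituting back $b=\rho-\lambda+\frac{1}{2}$, $c=2\rho+1$ and re-expressing both sides through the Whittaker function via the two displayed identities yields \eqref{deriv}. There is no genuine analytic obstacle here: the entire content is the single beta-ratio computation and the bookkeeping of the parameter shifts, the only point deserving a word of justification being the interchange of differentiation and summation, which follows from the local uniform convergence of the entire series \eqref{FECH}.
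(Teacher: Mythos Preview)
Your proposal is correct and follows essentially the same route as the paper: both reduce \eqref{deriv} via \eqref{FWhittaker} to the derivative formula $\frac{d^n}{dz^n}\Phi_{p,q}(b;c;z)=\frac{(b)_n}{(c)_n}\Phi_{p,q}(b+n;c+n;z)$. The only difference is that the paper simply cites this identity from \cite[Eq.~(9.3)]{Choi2014}, whereas you supply a self-contained proof by termwise differentiation and the beta-ratio $B(b+1,c-b)/B(b,c-b)=b/c$.
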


\begin{proof}
From \eqref{FWhittaker}, we have
\begin{eqnarray}\label{deriv1}
e^{\frac{z}{2}}z^{-\rho-\frac{1}{2}}M_{p,q;\lambda,\rho}(z)=\Phi_{p,q}\Big(\rho-\lambda+\frac{1}{2};2\rho+1;z\Big).
\end{eqnarray}
Recall the following formula (see \cite[Eq. (9.3)]{Choi2014}
\begin{eqnarray}\label{deriv2}
\frac{d^n}{dz^n}\Big\{\Phi_{p,q}\Big(b;c;z\Big)\}=\frac{(b)_n}{(c)_n}\Phi_{p,q}\Big(b+n;c+n;z\Big)
\quad \left(n\in\mathbb{N}_0\right).
\end{eqnarray}
Applying \eqref{deriv2} to \eqref{deriv1}, we get the desired result.
\end{proof}

\vskip 3mm

\section{Remarks and special cases}

In this paper, we made a main use of the results in \cite{Choi2014} to give certain formulas 
involving the $(p,q)$-Whittaker function $M_{p,q;\lambda,\rho}(z)$ in \eqref{FWhittaker}, whose essential factor is
the function $\Phi_{p,q}(b;c;z)$. Some important properties  for the $\Phi_{p,q}(b;c;z)$ and various formulas involving
it have already been established in \cite{Choi2014}. Yet, for   a more convenient and faster use of 
certain properties for the $(p,q)$-Whittaker function $M_{p,q;\lambda,\rho}(z)$ and diverse formulas involving it,
we intend to write this paper.

The results presented here, being very general, can be reduced to yield those involving relatively simple 
Whittaker functions. The results given here when $p=q$
reduce to the known results associated with the extended Whittaker function $M_{p;\lambda,\rho}(z)$ in \eqref{EWhittaker}
(see \cite{Nagar}). The special cases $p=q=0$ of the results given here will yield the corresponding ones
involving the classical Whittaker function $M_{\lambda,\rho}(z)$ in \eqref{Whittaker}.
 
 Among numerous special cases of the results presented here, we choose to demonstrate only one formula.
 Setting $\delta=1$ and $\mu=-1$  in \eqref{Th3-eq1}, we obtain the Laplace transformation of 
 the $(p,q)$-Whittaker function $M_{p,q;\lambda,\rho}(-t)$ in \eqref{FWhittaker}
 \begin{equation} \label{Laplace-M-p-q}
\aligned
& \int_0^\infty \,e^{-\alpha t}M_{p,q;\lambda,\rho}(-t)\,dt=\frac{(-1)^{\rho+\frac{1}{2}}\Gamma(\rho+\frac{3}{2})}{(\alpha-\frac{1}{2})^{\rho+\frac{3}{2}}}\\
&\hskip 20mm \times F_{p,q}\Big(\rho+\frac{3}{2}, \rho-\lambda+\frac{1}{2}; 2\rho+1;\frac{2}{1-2\alpha}\Big)
\endaligned
\end{equation}
\begin{equation*}
  \left(p,\, q \in \mathbb{R}_0^+;\,\alpha>\frac{3}{2};\,\Re(\rho)>-\frac{3}{2},\,  \Re(\rho\pm\lambda)>-\frac{1}{2};\, \arg(-1)=\pi  \right).
\end{equation*}

\vskip 20pt

\end{document}